\numberwithin{equation}{section}
\theoremstyle{plain}
\newtheorem{prop}{Proposition}
\numberwithin{lm}{section}
\numberwithin{term}{section}
\numberwithin{claim}{section}
\newtheorem{cor}{Corollary} 
\numberwithin{cor}{section}
\numberwithin{problem}{section}
\theoremstyle{definition} 
\numberwithin{definition}{section}
\numberwithin{example}{section}
\numberwithin{conjecture}{section}
\numberwithin{condition}{section}
\theoremstyle{remark} 
\numberwithin{remark}{section}
\numberwithin{remark}{section}
\date{August 15, 2024}
\begin{document}
\title{Catalan Numbers, Riccati Equations and Convergence}

\author[1]{Yicheng Feng} 
\author[2]{Jean-Pierre Fouque}
\author[3]{Tomoyuki Ichiba }
\affil[1,2,3]{University of California Santa Barbara}

\maketitle

\begin{abstract}
We analyze both finite and infinite systems of  Riccati equations derived from  stochastic differential games on infinite networks. We discuss a connection to the Catalan numbers and the convergence of the Catalan functions by Fourier transforms. 
\end{abstract}

\bigskip 

{\bf Keywords:} Catalan functions, Riccati equation for periodic network, Stochastic differential games for infinitely many players

\section{Introduction}
The Catalan numbers $\,C_{n}\,$, $\,n \ge 0 \,$ appear as a sequence of natural numbers defined by 
\begin{equation} \label{eq: Catalan}
C_{n} \, :=\,  \frac{\,1\,}{\,n+1\,} \left ( \begin{array}{c} 2n \\ n \end{array} \right ) \, =\,  \frac{\,(2n )! \,}{\,n ! \, (n+1)! \,} \, , \quad n \ge 0 \, .  
\end{equation}
For example, $\, C_{0} \, =\,  1\,$, $\, C_{1} \, =\,  1\,$, $\, C_{2}	 \, =\, 2\,$ and so on. This increasing sequence satisfies the recurrence relations 
\begin{equation} 
C_{n} \, =\,  C_{0} C_{n-1} + C_{1} C_{n-2} + \cdots + C_{n-1} C_{0} \, =\,   \sum_{j=1}^{n} C_{j-1} C_{n-j} \, , \quad n \ge 1 \, 
\end{equation}
and grows like $\, 4^{n} n^{-3/2} \, / \, \sqrt{\pi} \,$, as $\, n \to \infty\,$. 
The Catalan numbers appear in many combinatorial counting problems, for example, counting of non-crossing partitions, the number of the Dyck words, the number of standard Young tableaux (see the monographs \cite{MR0847717}, \cite{MR1676282}, \cite{MR3467982}  by Stanley).   

In this paper we shall discuss  the Catalan numbers and more generally Catalan functions in the context of the stochastic differential games on infinite network introduced in the recent papers (Feng, Fouque and Ichiba \cite{FFI-21a} and \cite{FFI-21b}, see also the referenced papers therein for the related mean-field games,  some topics of stochastic differential games and their applications), where the Catalan functions are defined by the solution to the system of the infinite Riccati equations. Note that the system of the infinite Riccati equations determines the Nash equilibrium of the stochastic differential game for infinitely many players. Then we prove the convergence of the solution of the finite Riccati equation corresponding to a stochastic differential game for finitely many players (say $\,N\,$ players) on a periodic network, as $\, N \to \infty\,$, to the solution of a system of infinite Riccati equations.

Following  Feng, Fouque and Ichiba \cite{FFI-21a}, let us recall the following  Riccati equation for the countably many continuous functions $\, \varphi^{i}_{t}\,$, $\, i \in \mathbb N_{0}\,$, $\, 0 \le t \le T\,$, given by the  system 
\begin{equation} \label{eq: Riccati inf}
\dot{ \varphi}^{\, i}_{t} \, =\, \frac{\, {\mathrm d} \varphi_{t}^{i}\,}{\,{\mathrm d} t \,}\, =\,  \sum_{j=0}^{i} \varphi_{t}^{\, j} \, {\varphi}^{\, i-j}_{t} - \varepsilon^{i} ; \quad i \, \in \mathbb N_{0} \, , 
\end{equation}
where $\, \varepsilon^{i}\,$ are given by some real constants $\, \varepsilon^{0} := \varepsilon\,$, $\, \varepsilon^{1} \, :=\, - \varepsilon \,$, $\, \varepsilon^{i} \, =\,  0 \,$ for $\, i \neq 0 , 1\,$, and the terminal conditions are $\, {\varphi}^{0}_{T} \, :=\, c  \,$, $\, {\varphi}^{1}_{T} := - c \,$, $\, {\varphi}_{T}^{i} \, :=\,  0 \,$ for $\, i \, \neq \, 0, 1 \,$. Here, ``$\, \dot{\quad}"$ denotes the differentiation with respect to $\, t \,$, and the superscript $\, i\, $ is not the power of function $\, \phi\, $ but the index $\, i \in \mathbb N_0\, $. Given $\, \varepsilon > 0 \,$ and $\, c \ge 0 \,$, the solution $\, \{\varphi_{t}^{i}, i \in \mathbb N, 0 \le t \le T\} \,$ of \eqref{eq: Riccati inf} exists and is unique (Lemma 1 of \cite{FFI-21a}). We call such sequence of functions the {\it Catalan functions}.

The solution $\, \varphi^{i}_{t}\,$, $\, 0 \le t\le T\,$, $\, i \in \mathbb N_{0}\,$ depends on $\, \varepsilon\,$ and $\, T\,$. Particularly, we take $\, \varepsilon \, =\,  1 \, =\,  \varepsilon^{0} \, =\,  - \varepsilon^{1}\,$, and consider the stationary solution by letting the time derivative zero, that is, $\, \dot{\varphi}^{\, i}_{t} \equiv 0 \,$, $\, i \in \mathbb N_{0}\,$, $\, t \ge 0 \,$. Then the stationary solution $\, \{\varphi^{i} \}_{i \in \mathbb N_{0}}\,$ of \eqref{eq: Riccati inf} satisfies 
\[
\varphi^{0} \, =\,  1 \, , \quad \varphi^{1} \, =\,  - \frac{\,1\,}{\,2\,} \, , \quad \text{ and } \quad \varphi^{i} \, =\, - \frac{\,1\,}{\,2\,} \sum_{j=1}^{i-1} \varphi^{j} \varphi^{i-j}\, ; \quad i \ge 2 \, .   
\]
Thus, the relation between the stationary solution $\, \{\varphi^{i} \}_{i \ge 1}\,$ of \eqref{eq: Riccati inf}  and the Catalan numbers $\, \{C_{i}\}_{i \in \mathbb N_{0}}\,$in \eqref{eq: Catalan} is 
\begin{equation}
\varphi^{i} \, =\,  - \frac{2 C_{i-1}}{4^{i}} \,; \quad i \ge 1 \, . 
\end{equation}

Let us also recall the Riccati equation for $\,N\,$ continuous functions $\, \phi^i_t\, $, $\, i = 0, 1, \ldots , N-1\, $, $\, 0 \le t \le T\, $, given by the following system 
\begin{equation} \label{eq: Riccati N}
\dot{\phi}^i_t := \, \frac{\,{\mathrm d} \phi_{t}^{i} \,}{\, {\mathrm d} t\,}=\, \sum_{j=0}^{N-1} \phi_t^j \phi_t^{N+i-j} - \varepsilon^i \, ; \quad  t \ge 0 
\end{equation}
of ordinary differential equations for $\, i = 0, 1, \ldots , N-1 \, $ and $\, 0 \le t \le T\, $ with the given terminal values $\, \phi_T^0 := c =: - \phi^1_T > 0 \, $, $\, \phi^i_T := 0 \, $, $\, i = 2, \ldots , N-1\, $ and real constants $\, \varepsilon^0 := \varepsilon =: -\varepsilon^1 > 0 \, $ and $\, \varepsilon^i :=0\, $ for $\, i = 2, \ldots , N-1\, $.  We impose the periodic condition $\, \phi_\cdot^i = \phi_\cdot^{i+N}\, $ for every $\, i \in \mathbb Z \, $. The solution $\, \{\phi_{t}^{i}, i = 0, 1, \ldots , N-1, 0 \le t \le T\}\,$ of \eqref{eq: Riccati N} exists uniquely and depends on $\,N\,$.

\medskip 

The finite system \eqref{eq: Riccati N} leads us to the Nash equilibrium for the $\,N\,$-player, linear-quadratic stochastic differential game on the finite directed chain periodic network, while the infinite system \eqref{eq: Riccati inf} leads us to the Nash equilibrium for the infinitely many player, linear-quadratic stochastic differential game on the infinite directed chain network. In \cite{FFI-21a} and \cite{FFI-21b} the question of the convergence of the Nash equilibrium for the $\,N\,$-player game to the Nash equilibrium for the infinitely many player game was left as an open question in the periodic case considered here. In this paper we solve this open question positively. 

The main results of this paper are the following propositions of convergence. 

\begin{prop}\label{prop: cvRiccati} For any fixed $\, j \in \mathbb N_{0}\,$ and $\, t \in [0, T]\,$, the solution $\, \phi_{t}^{j}  \,$ of the finite system \eqref{eq: Riccati N} converges to $\,\varphi_{t}^{j} \,$ of the infinite system \eqref{eq: Riccati inf}, as $\, N \to \infty\,$. That is, 
\begin{equation} \label{eq: prop:cvRiccati}
\lim_{N\to \infty} \phi_{t}^{j} \, =\,  \varphi_{t}^{j} \, ; \quad j \in \mathbb N_{0} \, , \, t \in [0, T ] \, .  
\end{equation}
\end{prop}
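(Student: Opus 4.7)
The plan is to diagonalize both Riccati systems via Fourier transform, reducing each to the \emph{same} scalar complex Riccati ODE parametrized by a frequency variable $\theta$. After this reduction, the desired convergence $\phi_t^j \to \varphi_t^j$ follows because the inverse DFT representing $\phi_t^j$ is precisely the $N$-point Riemann sum approximation of the inverse Fourier integral representing $\varphi_t^j$.

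First, for the finite system, set $\theta_k := 2\pi k/N$ and define the DFT $\hat\phi_k(t) := \sum_{n=0}^{N-1} \phi_t^n\, e^{\mathrm{i} n\theta_k}$. The $N$-periodicity of $\phi_\cdot^\cdot$ turns the quadratic term $\sum_{j=0}^{N-1}\phi_t^j\phi_t^{N+n-j}$ into the cyclic convolution $(\phi_t * \phi_t)_n$, whose DFT is $\hat\phi_k(t)^2$; the DFT of $(\varepsilon^i)_{i=0}^{N-1}$ is $\varepsilon(1 - e^{\mathrm{i}\theta_k})$; and the terminal condition becomes $\hat\phi_k(T) = c(1 - e^{\mathrm{i}\theta_k})$. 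Hence each $\hat\phi_k(t)$ solves the scalar complex Riccati ODE
\[
\dot u(t) \,=\, u(t)^2 - \varepsilon\bigl(1 - e^{\mathrm{i}\theta_k}\bigr), \qquad u(T) \,=\, c\bigl(1 - e^{\mathrm{i}\theta_k}\bigr).
\]

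For the infinite system, introduce the generating function $\Phi(t,\theta) := \sum_{n=0}^\infty \varphi_t^n\, e^{\mathrm{i} n\theta}$; its Cauchy-convolution structure yields exactly the same scalar PDE, $\partial_t\Phi(t,\theta) = \Phi(t,\theta)^2 - \varepsilon(1 - e^{\mathrm{i}\theta})$, with $\Phi(T,\theta) = c(1 - e^{\mathrm{i}\theta})$. Uniqueness of this scalar ODE in $t$ forces $\hat\phi_k(t) = \Phi(t,\theta_k)$ for every $k$ and $t\in[0,T]$. Inverting the transforms,
\[
\phi_t^j \,=\, \frac{1}{N}\sum_{k=0}^{N-1} \Phi(t,\theta_k)\, e^{-\mathrm{i} j\theta_k}, \qquad \varphi_t^j \,=\, \frac{1}{2\pi}\int_0^{2\pi} \Phi(t,\theta)\, e^{-\mathrm{i} j\theta}\, d\theta,
\]
so $\phi_t^j$ is the left Riemann sum of mesh $2\pi/N$ for the integral defining $\varphi_t^j$, and \eqref{eq: prop:cvRiccati} follows from continuity of $\theta\mapsto \Phi(t,\theta)$ on $[0,2\pi]$.

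The main obstacles are the two prerequisites for this chain of reasoning: (i) sufficient summability of $(\varphi_t^n)_{n\ge 0}$ in $n$, uniformly in $t\in[0,T]$, so that $\Phi(t,\cdot)$ is well-defined and continuous on $[0,2\pi]$; and (ii) existence on all of $[0,T]$ of the scalar Riccati solution $U(t,\theta)$ for every $\theta\in[0,2\pi]$, with continuous dependence on $\theta$. I would handle (ii) by the standard Cole--Hopf-type linearization $U = -\dot v/v$, which converts the Riccati into the linear equation $\ddot v = \varepsilon(1 - e^{\mathrm{i}\theta})\, v$, whose solution is an explicit combination of complex exponentials in $t$; one then checks that $v$ does not vanish on $[0,T]$. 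The point $\theta=0$ is a removable singularity where the unique solution is $U(\cdot,0)\equiv 0$, matching the limit of neighboring $\theta_k$. Issue (i) should follow from the Catalan-type decay already encoded in the stationary solution $\varphi^i = -2C_{i-1}/4^i$, extended to the time-dependent case through the same Fourier representation together with uniform bounds on $\Phi(t,\theta)$.
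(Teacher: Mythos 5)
Your proposal follows essentially the same route as the paper: diagonalize the finite system by the discrete Fourier transform, observe that each $\hat\phi_k$ solves the scalar complex Riccati equation indexed by the frequency $\theta_k = 2\pi k/N$, solve that scalar equation explicitly via the linearization $u = -\dot v / v$, and recognize $\phi_t^j$ as the $N$-point Riemann sum of the Fourier-coefficient integral representing $\varphi_t^j$. The one step where you deviate is the identification of $\varphi_t^j$ with the $j$-th Fourier coefficient of the scalar solution, and there your plan has a gap. You propose to form the generating function $\Phi(t,\theta) = \sum_{n\ge 0}\varphi_t^n e^{\mathrm{i}n\theta}$ and verify that it satisfies the scalar ODE; this requires absolute summability of $(\varphi_t^n)_{n}$, uniformly in $t$, both to define $\Phi$ on $|e^{\mathrm{i}\theta}|=1$ and to justify the Cauchy-product identity and the termwise differentiation in $t$. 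Your suggested resolution of this prerequisite --- deducing the decay of $\varphi_t^n$ ``through the same Fourier representation together with uniform bounds on $\Phi$'' --- is circular, since that representation is precisely what is being established; the stationary decay $\varphi^i = -2C_{i-1}/4^i$ does not by itself transfer to the time-dependent solution. The paper runs the identification in the opposite direction and thereby avoids the issue: it starts from the explicit, bounded solution $f_t(x)$ of the auxiliary scalar Riccati equation, expands it in the orthonormal basis $\{e^{-2\pi\sqrt{-1}jx}\}$ of $L^2([0,1])$, checks that the resulting coefficients $\mathbf{c}_{j,t}$ satisfy the infinite Riccati system with the correct terminal data, and then invokes the uniqueness of the solution of the infinite system (Lemma 1 of \cite{FFI-21a}) to conclude $\mathbf{c}_{j,t} = \varphi_t^j$. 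No a priori decay of the $\varphi_t^n$ is needed in that direction. If you reverse your identification step accordingly, the rest of your argument (the scalar ODE for $\hat\phi_k$, the non-vanishing of $v$ on $[0,T]$, the removable singularity at $\theta = 0$, and the Riemann-sum limit) goes through as written.
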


\begin{prop} \label{prop: cvprod} For any fixed $\, i \in \mathbb N_{0}\,$ and $\, t \in [0, T] \,$, we have the convergence results 
\begin{equation} \label{eq: cvprod}
\lim_{N\to \infty} \sum_{j=0}^{N-1} \phi_{t}^{j} \phi_{t}^{N+i-j} \, =\,  \sum_{j=0}^{i} \varphi^{j}_{t} \varphi^{i-j}_{t} \, , \quad \text{ and } \quad 
\lim_{N\to \infty} \sum_{j=i+1}^{N-1} \phi^{j}_{t} \phi^{N+i-j}_{t}\, =\,   0 \,. 
\end{equation}
\end{prop}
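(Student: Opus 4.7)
The plan is to exploit the periodic structure by Fourier transform on $\mathbb Z/N\mathbb Z$ and reduce everything to a one-dimensional Riemann sum. First, the periodicity $\phi_t^{N+i-j}=\phi_t^{i-j}$, valid whenever $0\le j\le i$ (since then $N+i-j\ge N$), gives the decomposition
\[
\sum_{j=0}^{N-1}\phi_t^j\phi_t^{N+i-j} \;=\; \sum_{j=0}^{i}\phi_t^j\phi_t^{i-j} \;+\; \sum_{j=i+1}^{N-1}\phi_t^j\phi_t^{N+i-j} \;=:\; A_N+B_N.
\]
Since $A_N$ contains only $i+1$ terms, Proposition \ref{prop: cvRiccati} immediately gives $A_N\to\sum_{j=0}^{i}\varphi_t^j\varphi_t^{i-j}$. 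The two limits in \eqref{eq: cvprod} are therefore equivalent modulo Proposition \ref{prop: cvRiccati}: it is enough to show that the full cyclic sum $A_N+B_N$ tends to $\sum_{j=0}^{i}\varphi_t^j\varphi_t^{i-j}$, and then $B_N\to 0$ follows by subtraction.

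To handle the full cyclic sum I would diagonalise both Riccati systems by the Fourier transform. Define
\[
\hat\phi_t^N(k) \;:=\; \sum_{j=0}^{N-1}\phi_t^j e^{-2\pi \mathrm{i}\, jk/N},\qquad \Phi_t^\infty(\theta) \;:=\; \sum_{j=0}^\infty \varphi_t^j e^{-\mathrm{i} j\theta}.
\]
Because the convolution-like sum in \eqref{eq: Riccati N} is, after using the periodicity, the cyclic convolution of $\phi_t$ with itself on $\mathbb Z/N\mathbb Z$, the discrete Fourier transform decouples \eqref{eq: Riccati N} into a family of scalar complex Riccati equations,
\[
\dot{\hat\phi}_t^N(k) \;=\; \bigl(\hat\phi_t^N(k)\bigr)^{2} \;-\; \varepsilon\bigl(1-e^{-2\pi \mathrm{i}\, k/N}\bigr),\qquad \hat\phi_T^N(k)=c\bigl(1-e^{-2\pi \mathrm{i}\, k/N}\bigr),
\]
for $k=0,\ldots,N-1$. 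Applying the Fourier series termwise to \eqref{eq: Riccati inf} produces, for each $\theta\in[0,2\pi)$,
\[
\dot\Phi_t^\infty(\theta) \;=\; \bigl(\Phi_t^\infty(\theta)\bigr)^{2} \;-\; \varepsilon(1-e^{-\mathrm{i}\theta}),\qquad \Phi_T^\infty(\theta)=c(1-e^{-\mathrm{i}\theta}).
\]
At $\theta=2\pi k/N$ the two equations coincide, and uniqueness of the scalar complex Riccati ODE (run backwards from $T$) yields the key identity $\hat\phi_t^N(k)=\Phi_t^\infty(2\pi k/N)$ for $k=0,\ldots,N-1$.

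The inverse discrete Fourier transform then writes the full cyclic sum as a Riemann sum,
\[
\sum_{j=0}^{N-1}\phi_t^j\phi_t^{N+i-j} \;=\; \frac{1}{N}\sum_{k=0}^{N-1}\bigl(\Phi_t^\infty(2\pi k/N)\bigr)^{2} e^{2\pi \mathrm{i}\, ki/N} \;\xrightarrow[N\to\infty]{}\; \frac{1}{2\pi}\int_0^{2\pi}\bigl(\Phi_t^\infty(\theta)\bigr)^{2}e^{\mathrm{i}\theta i}\,d\theta,
\]
and by Fourier inversion applied to the square $(\Phi_t^\infty)^{2}$ the right-hand integral equals $\sum_{j=0}^{i}\varphi_t^j\varphi_t^{i-j}$, which settles both limits. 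The main obstacle is to justify the two infinite-dimensional steps: the termwise differentiation in $t$ of the Fourier series that produces the Riccati equation for $\Phi_t^\infty$, and the convergence of the Riemann sum (equivalently, continuity in $\theta$ of $(\Phi_t^\infty)^{2}$). Both reduce to a uniform absolute-summability bound $\sup_{t\in[0,T]}\sum_{j}|\varphi_t^j|<\infty$, which is consistent with the Catalan-type decay $|\varphi^j|\sim j^{-3/2}$ recalled above in the stationary case and which I would extract from (or prove as an easy consequence of) Lemma 1 of \cite{FFI-21a}.
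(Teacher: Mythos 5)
Your proposal is correct in outline and follows the same core strategy as the paper: diagonalize the periodic system by the discrete Fourier transform, identify $\hat\phi_t^N(k)$ with a continuous-parameter scalar Riccati solution evaluated at $k/N$, recognize the cyclic sum as a Riemann sum of the square of that solution, and recover the second limit by subtracting the finitely many terms handled by Proposition \ref{prop: cvRiccati}. The one genuine difference is the direction of the identification. You define $\Phi_t^\infty(\theta)=\sum_j\varphi_t^j e^{-\mathrm{i}j\theta}$ and differentiate the series termwise to show it solves the scalar Riccati ODE; this forces you to justify termwise differentiation and the continuity of $(\Phi_t^\infty)^2$, and you correctly flag that both hinge on a uniform absolute-summability bound $\sup_{t\in[0,T]}\sum_j\lvert\varphi_t^j\rvert<\infty$. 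That bound is not established in the present paper (its estimate \eqref{eq: upper bound varphitj} is only a uniform sup bound on each coefficient), and it is not obviously contained in the existence-uniqueness Lemma 1 of \cite{FFI-21a}; the stationary Catalan asymptotics $\lvert\varphi^j\rvert\sim j^{-3/2}$ is suggestive but does not by itself cover the time-dependent solution. The paper goes the other way: it first constructs the explicit, bounded, continuous auxiliary solution $f_t(x)$ in \eqref{eq: ftx1}, shows that its Fourier coefficients satisfy the infinite Riccati system (using only the $L^2$ convolution theorem for the product $f_t^2$), and then invokes uniqueness of the infinite system to conclude $\varphi_t^j=\int_0^1 f_t(x)e^{2\pi\sqrt{-1}jx}\,\mathrm{d}x$. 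This ordering buys you the Riemann-sum convergence and the Fourier-inversion step for free from the explicit formula, with no summability hypothesis on $\{\varphi_t^j\}$ needed. So your argument is essentially the paper's, but to make it self-contained you would either have to prove the summability bound or reverse the identification as the paper does.
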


\begin{prop} \label{prop: unif conv} For any $\, K \in \mathbb N_{0}\,$, $\, T > 0 \,$, the solution $\, \{\phi_{t}^{i}, i = 0, 1, \ldots , N-1, 0 \le t \le T\}\,$ of \eqref{eq: Riccati N} and the solution $\, \{\varphi_{t}^{i}, i \in \mathbb N, 0 \le t \le T\} \,$ of \eqref{eq: Riccati inf} satisfy 
\begin{equation}
\lim_{N\to \infty} \sup_{0 \le i \le K} \sup_{0 \le t \le T} \lvert \phi^{i}_{t} - \varphi^{i}_{t} \rvert \, =\,  0 \, . 
\end{equation}
That is, the first $\,K\,$ elements of the solution of \eqref{eq: Riccati N} converges uniformly to the first $\,K\,$ elements of the solution of \eqref{eq: Riccati inf}, as $\, N \to \infty\,$. 
\end{prop}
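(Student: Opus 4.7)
My plan is to upgrade the pointwise convergence of Propositions~\ref{prop: cvRiccati}--\ref{prop: cvprod} to uniform convergence by setting $\psi^i_t := \phi^i_t - \varphi^i_t$ and running a backward Gronwall estimate on $f(t) := \max_{0 \le i \le K}|\psi^i_t|$.

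First, using the periodicity $\phi^{k+N}_\cdot = \phi^k_\cdot$, I would split the convolution in \eqref{eq: Riccati N} into a ``near-diagonal'' part matching \eqref{eq: Riccati inf} plus a ``wrap-around'' remainder:
\begin{equation*}
\dot{\phi}^{i}_{t} \,=\, \sum_{j=0}^{i}\phi^{j}_{t}\phi^{i-j}_{t} \,+\, \sum_{j=i+1}^{N-1}\phi^{j}_{t}\phi^{N+i-j}_{t} \,-\, \varepsilon^{i}.
\end{equation*}
Subtracting \eqref{eq: Riccati inf} and using the identity $ab-cd = (a-c)b+c(b-d)$ gives
\begin{equation*}
\dot{\psi}^{i}_{t} \,=\, \sum_{j=0}^{i}\bigl(\psi^{j}_{t}\phi^{i-j}_{t} + \varphi^{j}_{t}\psi^{i-j}_{t}\bigr) \,+\, E^{i}_{N}(t),\qquad E^{i}_{N}(t) \,:=\, \sum_{j=i+1}^{N-1}\phi^{j}_{t}\phi^{N+i-j}_{t}.
\end{equation*}
For $K<N$ the terminal data of $\phi$ and $\varphi$ coincide on $\{0,\dots,K\}$, so $\psi^{i}_{T}=0$.

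Second, I would invoke a uniform a priori bound $\sup_{N,i,t}|\phi^{i}_{t}|\vee\sup_{i,t}|\varphi^{i}_{t}|\le M$; for the infinite system this is Lemma~1 of \cite{FFI-21a}, and the same Riccati comparison applied to \eqref{eq: Riccati N} yields an $N$-independent constant in the finite case. Integrating the ODE for $\psi^{i}$ backward from $T$ and noting that each of the $i+1\le K+1$ summands is bounded by $2Mf(s)$ produces
\begin{equation*}
f(t) \,\le\, 2M(K+1)\int_{t}^{T} f(s)\,ds \,+\, \int_{t}^{T} R_{N}(s)\,ds,\qquad R_{N}(s):=\max_{0\le i\le K}|E^{i}_{N}(s)|.
\end{equation*}
Backward Gronwall then gives $\sup_{0\le t\le T}f(t)\le e^{2M(K+1)T}\int_{0}^{T} R_{N}(s)\,ds$, reducing the proposition to showing $\int_{0}^{T} R_{N}(s)\,ds\to 0$.

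The decisive, and I expect hardest, step is to show $\int_{0}^{T} R_{N}(s)\,ds\to 0$. By Proposition~\ref{prop: cvprod} we have $E^{i}_{N}(s)\to 0$ for each fixed $i$ and $s$, so $R_{N}(s)\to 0$ pointwise in $s$; dominated convergence then reduces matters to finding an $N$-independent majorant for $R_{N}$. The crude bound $|E^{i}_{N}|\le M^{2}N$ is useless, so the plan is to establish a uniform-in-$N$ $\ell^{2}$ estimate $\sum_{j=0}^{N-1}(\phi^{j}_{s})^{2}\le C$ valid for $s\in[0,T]$, after which Cauchy--Schwarz yields $|E^{i}_{N}(s)|\le C$ immediately. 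Such an $\ell^{2}$ bound is what the Fourier-transform methods alluded to in the abstract are naturally suited to: the discrete convolution in \eqref{eq: Riccati N} diagonalizes on $\mathbb{Z}/N\mathbb{Z}$ into scalar Riccati equations, and Parseval's identity transfers $L^{\infty}$ control of the Fourier coefficients into $\ell^{2}$ control of $(\phi^{j}_{s})_{j}$. Securing this Fourier-side bound uniformly in $N$ is the main technical obstacle; once in hand, the Gronwall--dominated-convergence chain above closes the argument.
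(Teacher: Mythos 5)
Your argument is correct, and its skeleton --- splitting the finite convolution into the part matching \eqref{eq: Riccati inf} plus the wrap-around remainder $E^i_N$, using that the terminal data agree for $i\le K<N$, and running a backward Gronwall estimate on $\max_{0\le i\le K}\lvert\phi^i_t-\varphi^i_t\rvert$ --- is exactly the paper's proof. The two arguments part ways only in the treatment of the remainder. The paper shows $\sup_{0\le i\le K}\sup_{0\le t\le T}\lvert E^i_N(t)\rvert\to 0$ by observing that the Riemann-sum approximation in \eqref{eq: prop: cvprod2} is uniform in $(i,t)$ (uniform continuity of $f$ on the compact $[0,T]\times[0,1]$), and feeds this sup bound directly into Gronwall; you instead need only $\int_0^T\max_{0\le i\le K}\lvert E^i_N(s)\rvert\,{\mathrm d}s\to 0$, obtained from the pointwise convergence in Proposition \ref{prop: cvprod} plus dominated convergence, with the majorant coming from a uniform-in-$N$ $\ell^2$ bound and Cauchy--Schwarz. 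The step you flag as the ``main technical obstacle'' is in fact already in hand: by \eqref{eq: relation FT-NDFT} and \eqref{eq: bound for phi hat} one has $\sup_{k}\lvert\widehat{\phi}^k_t\rvert\le c_T$ uniformly in $N$ and $t$, and Parseval for the discrete Fourier transform gives $\sum_{j=0}^{N-1}\lvert\phi^j_t\rvert^2=\tfrac{1}{N}\sum_{k=0}^{N-1}\lvert\widehat{\phi}^k_t\rvert^2\le c_T^2$, whence $\lvert E^i_N(t)\rvert\le c_T^2$ by Cauchy--Schwarz (the shifted indices $N+i-j$ remain in $\{0,\dots,N-1\}$ by periodicity). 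With that one line your plan closes; it is arguably more robust than the paper's, which asserts the uniformity of the Riemann-sum approximation without detail. One caution: for the a priori bound $M$ you should cite \eqref{eq: bound for phi hat} and \eqref{eq: upper bound varphitj} rather than an unspecified ``Riccati comparison'' for the finite system, since an $N$-independent bound by direct comparison on the $N$-dimensional system is not obvious, whereas the Fourier representation gives it immediately.
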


These results are proved in the following sections by Fourier transforms. The key observations are the representations \eqref{eq: relation FT-NDFT} and \eqref{eq: phitj ID} of the solutions $\, \{\phi^{j}_{t}\}\,$ and $\, \{\varphi^{j}_{t}\}\,$ of the Riccati equations \eqref{eq: Riccati N} and \eqref{eq: Riccati inf} in terms of the solution $\, \{f_{t}(x)\}\,$ in \eqref{eq: ftx1} of an auxiliary Riccati equation \eqref{eq: Riccati FT} below. 

After this manuscript was prepared, the recent papers \cite{MR4621507} and \cite{MR4615116} by Miana and Romero were brought up to our attention.  In these papers a slightly general quadratic equation for Catalan generating functions, its spectrum and resolvent operator are studied from the point of view of functional analysis. In contrast to \cite{MR4621507} and \cite{MR4615116}, the results here on the convergence of the solutions are more concrete, because of the specific form \eqref{eq: Riccati inf} of quadratic equation and because of the Fourier transforms. The generalization of the results in the current paper will be a theme of another paper. 


\section{Fourier transforms and Riccati equations}

Let us define the discrete Fourier transform $\, \{\widehat{\phi}_{t}^{k} , k \, =\, 0, 1, \ldots , N-1\}\,$, $\, 0 \le t \le T\,$ of the solution $\, \{ \phi_{t}^{i}, i \, =\,  0, 1, \ldots , N-1, 0 \le t \le T\} \,$  of the Riccati equation (\ref{eq: Riccati N}) by 
\begin{equation} \label{eq: hatphitk}
\widehat{\phi}_{t}^{k} \, :=\,  \sum_{j=0}^{N-1} \phi^{j}_{t}  \exp \Big( - \frac{2\pi \sqrt{-1} \, j k }{N}  \Big) \, ; \quad k \, =\,  0, 1, \ldots , N-1 \, , 0 \le t \le T \, . 
\end{equation}
Here, the superscript $\,k\,$ for $\,\widehat{\phi}_{\cdot}\,$ is not the  power but the index. $\, \sqrt{-1}\,$ is the complex square root of $\, - 1\,$. Inverting the discrete Fourier transform, we obtain 
\begin{equation} \label{eq: DFT inversion}
\phi_{t}^{j} \, =\, \frac{\,1\,}{\,N\,} \sum_{k=0}^{N-1} \widehat{\phi}^{k}_{t} \exp \Big(  \frac{2\pi \sqrt{-1}\,  j k }{N}  \Big)  \, ; \quad j\, =\,  0, 1, \ldots , N-1 \, ,  
\end{equation}
and in particular, 
\begin{equation} \label{eq: DFT inversion 0}
\phi_{t}^{0} \, =\,  \frac{\,1\,}{\,N\,} \sum_{k=0}^{N-1} \widehat{\phi}^{k}_{t} \, ; \quad 0 \le t \le T \, . 
\end{equation}

Since the discrete Fourier transform of the convolution of two sequences is the product of their discrete Fourier transforms, it follows from the Riccati equation (\ref{eq: Riccati N}) that $\,\widehat{\phi}_{t}^{k}\,$ in \eqref{eq: hatphitk} satisfies the one-dimensional Riccati equation 
\begin{equation} \label{eq: Riccati NDFT}
\dot{\widehat{\phi}_{t}^{k}} \, =\,  (\widehat{\phi}_{t}^{k})^{2} - (1 - e^{- 2\pi \sqrt{-1} k/N}) \varepsilon \, ; \quad 0 \le t \le T \,  
\end{equation}
with the terminal condition $\, \widehat{\phi}_{T}^{k} \, =\,  ( 1- e^{-2\pi \sqrt{-1} k/N}) c \,$ for $\, k \, =\,  0 , 1, \ldots , N-1\,$. 

In a similar manner, replacing $\, k / N\,$ by $\,x\,$ in \eqref{eq: Riccati NDFT}, let us consider the following, one-dimensional, auxiliary Riccati equation for $\,\mathbb C\,$-valued function $\, \{f_{t}(x), 0 \le t \le T \, , \, x \in [0, 1]  \}\,$ defined by  
\begin{equation} \label{eq: Riccati FT}
\dot{f}_{t}(x) \, =\,  (f_{t}(x))^{2} - ( 1 - e^{-2\pi \sqrt{-1} x} ) \varepsilon \, ; \quad 0 \le t \le T \, , \quad x \in [0, 1] \, 
\end{equation}
with the terminal condition $\, f_{T}(x) \, =\,  ( 1 - e^{-2\pi \sqrt{-1} x})c \,$, $\, x \in [0, 1] \,$. 

Since both Riccati equations \eqref{eq: Riccati NDFT} and \eqref{eq: Riccati FT} are  scalar-valued ordinary differential equations, we solve them explicitly by the standard method of solving the general Riccati equation of the form  
\begin{equation} \label{eq: gRiccati1}
\dot{y}_{t} \, =\,  a_{t} + b_{t} y_{t} + c_{t} (y_{t})^{2} \, ; \quad 0 \le t \le T 
\end{equation}
with some (smooth) functions $\, a_{\cdot}, b_{\cdot}, c_{\cdot}  \,$. That is, solving a second-order ordinary differential equation 
\begin{equation} \label{eq: gRiccati2}
\ddot{u}_{t} - \Big( b_{t} + \frac{\,\dot{c}_{t}\,}{\,c_{t}\,}\Big) \dot{u}_{t} + a_{t} c_{t} u_{t} \, =\,  0 \, 
\end{equation}
for $\,\{ u_{t}\}\,$, we obtain the solution $\, y_{t} \, =\,  - \dot{u}_{t} / (c_{t} u_{t})\,$, $\, 0 \le t \le T\,$ for the general Riccati equation. 
The solutions to our Riccati equations \eqref{eq: Riccati NDFT} and \eqref{eq: Riccati FT} are given by the following proposition. 

\begin{prop} \label{prop: Riccati FT}
The solution of the auxiliary Riccati equation \eqref{eq: Riccati FT} is given by 
\begin{equation} \label{eq: ftx1}
f_{t}(x) \, =\, \sqrt{ \varepsilon} \,  \, {\mathfrak r} (x) \, e^{\sqrt{-1} {\bm \theta} (x) } \cdot \frac{\, {\mathfrak a}^{+} (x) {\mathfrak e}_{t}^{+}(x) - {\mathfrak a}^{-}(x){\mathfrak e}_{t}^{-}(x) \,}{\,{\mathfrak a}^{+}(x) {\mathfrak e}_{t}^{+}(x) + {\mathfrak a}^{-}(x){\mathfrak e}_{t}^{-}(x) \,} \, ,  
\end{equation}
where $\, {\mathfrak a}^{\pm}(x)  \,$ and $\, \mathfrak e_{t}^{\pm} (x) \,$ are $\, \mathbb C\,$-valued functions defined by 
\begin{equation} \label{eq: ftx2}
{\mathfrak a}^{\pm}(x) \, :=\,  \sqrt{\, \varepsilon \, } \pm c \, {\mathfrak r} (x) \, e^{\sqrt{-1} {\bm \theta} (x) }  \, , \quad 
\mathfrak e_{t}^{\pm} (x) \, :=\,  \exp \Big( \pm \sqrt{ \varepsilon\, } {\mathfrak r} (x) e^{\sqrt{-1} {\bm \theta}  (x) }\,  (T-t) \Big)  \, ; \quad 0 \le t \le T \, 
\end{equation}
with 
\begin{equation} \label{eq: ftx3}
{\mathfrak r} (x) \, :=\,  [ 2 ( 1 - \cos ( {\,2\pi x  \,}) ) ]^{1/4} \, , \quad {\bm \theta} (x)  \, :=\, \frac{\,1\,}{\,2\,}\arctan \Big( \frac{\,\sin ( 2 \pi x ) \,}{\,1 - \cos ( 2 \pi x) \,}\Big) \, \in [0, \pi) \, 
\end{equation}
for fixed $\, x \in [0, 1 ] \,$. 
\end{prop}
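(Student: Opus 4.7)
The plan is to treat \eqref{eq: Riccati FT}, for each fixed $x \in [0,1]$, as a scalar Riccati equation in $t$ and apply the standard linearization recalled at \eqref{eq: gRiccati1}--\eqref{eq: gRiccati2}. With $a_t\equiv -(1-e^{-2\pi\sqrt{-1}x})\varepsilon$, $b_t\equiv 0$, and $c_t\equiv 1$ in the notation of \eqref{eq: gRiccati1}, equation \eqref{eq: gRiccati2} reduces to the constant-coefficient linear ODE $\ddot u_t = \varepsilon(1-e^{-2\pi\sqrt{-1}x})\,u_t$, whose characteristic roots are $\pm\sqrt{\varepsilon(1-e^{-2\pi\sqrt{-1}x})}$.

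The first real computation is the polar decomposition of $1-e^{-2\pi\sqrt{-1}x} = (1-\cos 2\pi x) + \sqrt{-1}\sin 2\pi x$: its modulus equals $\sqrt{2(1-\cos 2\pi x)} = \mathfrak r(x)^2$, and via a half-angle calculation its argument equals $2\bm\theta(x)\in [0,\pi)$ as defined in \eqref{eq: ftx3}. Hence the characteristic roots can be written as $\pm\sqrt\varepsilon\,\mathfrak r(x)\,e^{\sqrt{-1}\bm\theta(x)}$, and the general solution of the linear ODE, written in the $(T-t)$ variable for convenience, is $u_t = A\,\mathfrak e_t^+(x) + B\,\mathfrak e_t^-(x)$ with $\mathfrak e_t^{\pm}(x)$ as in \eqref{eq: ftx2}. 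The inverse substitution $f_t(x) = -\dot u_t/u_t$ then yields
\begin{equation*}
f_t(x) \,=\, \sqrt\varepsilon\,\mathfrak r(x)\,e^{\sqrt{-1}\bm\theta(x)}\cdot\frac{A\,\mathfrak e_t^+(x) - B\,\mathfrak e_t^-(x)}{A\,\mathfrak e_t^+(x) + B\,\mathfrak e_t^-(x)}.
\end{equation*}

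It then remains to pin down $A$ and $B$ from the terminal data. Since $\mathfrak e_T^{\pm}(x) = 1$, the terminal condition $f_T(x) = c(1-e^{-2\pi\sqrt{-1}x}) = c\,\mathfrak r(x)^2\,e^{2\sqrt{-1}\bm\theta(x)}$ forces $(A-B)/(A+B) = c\,\mathfrak r(x)\,e^{\sqrt{-1}\bm\theta(x)}/\sqrt\varepsilon$, which is solved by $A = \sqrt\varepsilon + c\,\mathfrak r(x)e^{\sqrt{-1}\bm\theta(x)} = \mathfrak a^+(x)$ and $B = \sqrt\varepsilon - c\,\mathfrak r(x)e^{\sqrt{-1}\bm\theta(x)} = \mathfrak a^-(x)$. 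Substituting these choices reproduces exactly \eqref{eq: ftx1}.

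The main subtlety, though really a minor one, is consistent bookkeeping of the complex square root: $\mathfrak r(x)e^{\sqrt{-1}\bm\theta(x)}$ with $\bm\theta(x)\in [0,\pi)$ needs to be verified to be a well-defined single-valued square root of $1-e^{-2\pi\sqrt{-1}x}$ on $[0,1]$, with the isolated point $x=0$ handled by the obvious continuous extension $f_t(0)\equiv 0$ (which trivially solves \eqref{eq: Riccati FT} at $x=0$). One should also observe that the denominator $\mathfrak a^+(x)\mathfrak e_t^+(x)+\mathfrak a^-(x)\mathfrak e_t^-(x)$ is nonvanishing on $[0,T]$: this is automatic from the existence of a smooth solution to the Riccati terminal-value problem on this interval, so the explicit formula derived above must coincide with that solution throughout.
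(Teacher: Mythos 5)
Your proof is correct and follows essentially the same route as the paper: for each fixed $x$ you linearize the scalar Riccati equation via $f_t(x)=-\dot u_t/u_t$, solve the resulting constant-coefficient second-order ODE using the square root $\mathfrak r(x)e^{\sqrt{-1}\bm\theta(x)}$ of $1-e^{-2\pi\sqrt{-1}x}$, and fit the two constants to the terminal condition, exactly as in the paper (your extra remarks on the branch of the square root, the degenerate point $x=0$, and the nonvanishing denominator only add care the paper omits). Incidentally, your linearized equation $\ddot u_t=\varepsilon(1-e^{-2\pi\sqrt{-1}x})\,u_t$ is the one consistent with \eqref{eq: gRiccati2} and with the final formula \eqref{eq: ftx1}; the second-order ODE displayed in the paper's proof carries an inessential sign slip.
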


\begin{proof} For each fixed $\,x \in [0, 1 ] \,$, we shall solve the Riccati equation \eqref{eq: Riccati FT} for $\,\{f_{t}(x)\}\,$, as the special case of the general Riccati equation \eqref{eq: gRiccati1} with $\, a_{t} := - ( 1 - e^{-2\pi \sqrt{-1}x} ) \varepsilon\,$, $\, b_{t} \, :=\, 0 \,$, $\, c_{t} \, =\,  1\,$, $\, 0 \le t \le T\,$. By the transformation from $\,y_{\cdot}\,$ in \eqref{eq: gRiccati1} to $\, u_{\cdot}\,$ in \eqref{eq: gRiccati2}, it amounts to solving the second-order differential equation 
\[
\ddot{u}_{t} + (1 - e^{-2\pi \sqrt{-1}x} ) \varepsilon u_{t} \, =\,  0 \, ; \quad 0 \le t \le T \, . 
\]
With the definitions \eqref{eq: ftx3} of $\,{\mathfrak r}(x) \,$ and $\, {\bm \theta}(x) \,$,  the square roots of $\, - ( 1 - e^{-2\pi \sqrt{-1}x}) \,$ is given by $\, \pm \sqrt{-1} {\mathfrak r}(x) e^{\sqrt{-1} {\bm \theta}(x)}\,$. Hence, the solution $\, u_{\cdot}\,$ to the second-order differential equation is given by  
\[
u_{t} (x) \, =\,  {\mathfrak c}_{1}(x) \cdot  e^{\sqrt{-1}{\mathfrak r}(x) e^{\sqrt{-1} {\bm \theta}(x)} t } + {\mathfrak c}_{2} (x) \cdot e^{ - \sqrt{-1}{\mathfrak r}(x) e^{\sqrt{-1} {\bm \theta}(x)} t } \, ; \quad 0 \le t \le T 
\]
for some $\, {\mathfrak c}_{i}(x)\,$, $\, i \, =\,  1, 2\,$ which are determined by the terminal condition $\, f_{T}(x) \, =\, - \dot{u}_{T} (x) \, /\,  u_{T} (x) \,$, and $\, f_{t}(x) \, =\, - \dot{u}_{t}(x)  \, /\,  u_{t}(x)  \,$ is given by \eqref{eq: ftx1} for $\, x \in [0, 1] \,$, $\, t \in [0, T] \,$. 
\end{proof}

\begin{prop}
With $\, \{f_{t}(x)\}\,$ defined in \eqref{eq: ftx1}, the solution of the Riccati equation \eqref{eq: Riccati NDFT} and the solution of the Riccati equation \eqref{eq: Riccati N}  are represented by 
\begin{equation} \label{eq: relation FT-NDFT}
\widehat{\phi}_{t}^{k} \, =\, f_{t} \Big( \frac{\,k\,}{\,N\,}\Big)  \, , \quad \text{ and } \quad 
{\phi}_{t}^{k} \, =\, \frac{\,1\,}{\,N\,}\sum_{j=1}^{N} f_{t} \Big( \frac{\,k\,}{\,N\,}\Big) \exp \Big( {2\pi} \sqrt{-1} j \cdot \frac{\,k\,}{\,N\,} \Big) \, 
\end{equation}
for $\, k \, =\, 0, 1, \ldots , N-1 \, $, $\, 0 \le t \le T \,$.  
Thus, there exists a constant $\,c_{T} := \sup_{0 \le t \le T}\sup_{x \in [0, 1]} \lvert f_{t}(x) \rvert  \in (0, \infty) \,$,  such that 
\begin{equation} \label{eq: bound for phi hat}
\sup_{N \ge 2} \sup_{0 \le k \le N-1} \sup_{0 \le t \le T} \lvert {\phi}^{k}_{t} \rvert \, \le\, \sup_{N \ge 2} \sup_{0 \le k \le N-1} \sup_{0 \le t \le T} \lvert \widehat{\phi}^{k}_{t}\rvert \le c_{T} \, . 
\end{equation}
\end{prop}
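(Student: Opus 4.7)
The plan is to establish the three assertions in order---the Fourier identification $\widehat{\phi}^k_t = f_t(k/N)$, the inversion-type formula for $\phi^k_t$, and the uniform bound in \eqref{eq: bound for phi hat}. The first two are essentially bookkeeping once Proposition \ref{prop: Riccati FT} is in hand; the real work lies in showing $c_T < \infty$.

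For the first identification, I would fix $k \in \{0,\dots,N-1\}$ and $x = k/N$, and compare \eqref{eq: Riccati NDFT} with \eqref{eq: Riccati FT} at this $x$: both are scalar Riccati equations with identical right-hand sides and identical terminal values $(1 - e^{-2\pi\sqrt{-1}k/N}) c$. Uniqueness of solutions to this scalar ODE---which Proposition \ref{prop: Riccati FT} obtains from the underlying linear second-order ODE---forces $\widehat{\phi}^k_t = f_t(k/N)$ on $[0,T]$. Substituting into the inverse DFT \eqref{eq: DFT inversion} then yields the representation of $\phi^k_t$ as an averaged Riemann-sum-like combination of values of $f_t$ at the nodes $k/N$.

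For the uniform bound, I would show that $f_t(x)$ extends to a continuous function on the compact rectangle $[0,T]\times[0,1]$, whence $c_T<\infty$ by the extreme value theorem. The technical content is that the denominator $\mathfrak{a}^+(x)\mathfrak{e}^+_t(x)+\mathfrak{a}^-(x)\mathfrak{e}^-_t(x)$ in \eqref{eq: ftx1} never vanishes. At $x\in\{0,1\}$ we have $\mathfrak{r}(x)=0$, so the denominator equals $2\sqrt{\varepsilon}>0$, and the prefactor $\sqrt{\varepsilon}\,\mathfrak{r}(x)$ in front of the quotient forces $f_t(x)\to 0$ continuously at the endpoints. For $x\in(0,1)$, I would rewrite the denominator in hyperbolic form as $2\sqrt{\varepsilon}\cosh(w)+2cz\sinh(w)$, where $z=\mathfrak{r}(x)e^{\sqrt{-1}\boldsymbol{\theta}(x)}$ and $w=\sqrt{\varepsilon}\,z(T-t)$, and verify by separating real and imaginary parts that it cannot vanish---equivalently, that the linear solution $u_t(x)$ from the proof of Proposition \ref{prop: Riccati FT} has no zero on $[0,T]$, which is the classical global-existence criterion for the Riccati equation. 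Once $c_T<\infty$ is in hand, the bound $\sup_{N,k,t}|\widehat{\phi}^k_t|\le c_T$ is immediate, and the remaining inequality follows by applying the triangle inequality to \eqref{eq: DFT inversion}: each of the $N$ summands has modulus at most $\max_j|\widehat{\phi}^j_t|$ and the prefactor $1/N$ absorbs the sum. The main obstacle is clearly the denominator-nonvanishing verification; everything else is routine once the explicit formula \eqref{eq: ftx1} and uniqueness are in place.
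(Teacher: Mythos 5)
Your proposal is correct and follows essentially the same route as the paper: identify $\widehat{\phi}^k_t$ with $f_t(k/N)$ by uniqueness of the scalar Riccati ODE (same right-hand side and terminal value), substitute into the inverse DFT, and bound $|\phi^k_t|$ by $\max_k|\widehat{\phi}^k_t|$ via the triangle inequality. The only divergence is that you sketch a justification of $c_T<\infty$ (continuity on $[0,T]\times[0,1]$ via non-vanishing of the denominator in \eqref{eq: ftx1}, equivalently non-vanishing of $u_t(x)$), a point the paper simply asserts; your sketch correctly locates the issue even though you do not carry the verification to completion.
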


\begin{proof}
For each fixed $\,k \, =\,  0, 1, \ldots , N-1\,$,  we solve the Riccati equation \eqref{eq: Riccati NDFT} for the discrete Fourier transform $\, \widehat{\phi}^{k}_{t}\,$ and obtain $\,\widehat{\phi}_{t}^{k} \, =\,  f_{t}(k / N) \,$ in a similar procedure, replacing $\, k / N\,$ by $\,x\,$ in the proof of Proposition \ref{prop: Riccati FT}.  Substituting it to the inverse discrete Fourier transform \eqref{eq: DFT inversion}, we obtain \eqref{eq: relation FT-NDFT}. The uniform bound \eqref{eq: bound for phi hat} is obtained directly by the representations \eqref{eq: relation FT-NDFT}. 
\end{proof}

In order to prove Proposition \ref{prop: cvRiccati}, we derive the following representation of the infinite Riccati solution $\, \{\varphi^{k}_{t}\}\,$ in terms of the auxiliary Riccati solution $\, \{f_{t}(x)\}\,$ in \eqref{eq: ftx1}. 

\begin{prop} \label{prop: phitj ID} With the solution $\, \{ f_{t}(x)\} \,$ in \eqref{eq: ftx1}  of the auxiliary Riccati equation \eqref{eq: Riccati FT}, the solution $\, \{ \varphi^{j}_{t}\} \,$ of the infinite Riccati equation \eqref{eq: Riccati inf} is represented as  
\begin{equation}  \label{eq: phitj ID} 
\varphi^{j}_{t} \, =\,  \int^{1}_{0} f_{t}(x) e^{2\pi \sqrt{-1} j x} {\mathrm d} x \, ; \quad j \in \mathbb N_{0} \, , \, \, 0 \le t \le T\, . 
\end{equation}
Consequently, we have the upper bound 
\begin{equation} \label{eq: upper bound varphitj}
\sup_{j \in \mathbb N_{0}} \sup_{0 \le t \le T}  \lvert \varphi_{t}^{j}\rvert \le c_{t} \, =\,  \sup_{0 \le t \le T} \sup_{x \in [0, 1]} \lvert f_{t}(x) \rvert \in (0, \infty ) \, . 
\end{equation}
\end{prop}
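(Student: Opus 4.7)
The plan is to identify $f_t$ with the Fourier-series generating function of the sequence $\{\varphi^n_t\}_{n \ge 0}$. Specifically, I define
\[
g_t(x) \, :=\, \sum_{n=0}^\infty \varphi^n_t \, e^{-2\pi \sqrt{-1}\, n\, x} \, , \qquad 0 \le t \le T \, , \, x \in [0,1] \, ,
\]
and aim to prove $g_t \equiv f_t$. Once this identification is in hand, the representation \eqref{eq: phitj ID} follows from Fourier orthogonality: $\int_0^1 g_t(x) e^{2\pi \sqrt{-1} j x} \, {\mathrm d} x = \varphi^j_t$ for $j \in \mathbb N_0$. The upper bound \eqref{eq: upper bound varphitj} is then immediate, since $\lvert \varphi^j_t \rvert \le \int_0^1 \lvert f_t(x) \rvert \, {\mathrm d} x \le c_T$, using the bound from \eqref{eq: bound for phi hat}.

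To prove $g_t = f_t$, I invoke the uniqueness of solutions to the scalar Riccati equation \eqref{eq: Riccati FT} at each fixed $x$: it suffices to show that $g_t$ also solves \eqref{eq: Riccati FT} with the same terminal condition. The terminal condition is immediate because only $\varphi^0_T = c$ and $\varphi^1_T = -c$ are nonzero, so $g_T(x) = c - c e^{-2\pi\sqrt{-1}x} = c(1 - e^{-2\pi\sqrt{-1}x}) = f_T(x)$. For the ODE, differentiating the series for $g_t$ term-by-term in $t$ and using \eqref{eq: Riccati inf} gives
\[
\dot{g}_t(x) \, = \, \sum_{n=0}^\infty \Bigl( \sum_{j=0}^n \varphi^j_t \varphi^{n-j}_t - \varepsilon^n \Bigr) e^{-2\pi\sqrt{-1}\, n\, x} \, .
\]
The Cauchy-product structure rearranges the double sum into $g_t(x)^2 = \sum_n \bigl( \sum_{j=0}^n \varphi^j_t \varphi^{n-j}_t \bigr) e^{-2\pi\sqrt{-1}\, n\, x}$, while the $\varepsilon^n$ piece collapses to $\varepsilon(1 - e^{-2\pi\sqrt{-1}x})$ by the choice $\varepsilon^0 = \varepsilon$, $\varepsilon^1 = -\varepsilon$, $\varepsilon^n = 0$ otherwise. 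Hence $\dot{g}_t(x) = g_t(x)^2 - \varepsilon(1 - e^{-2\pi\sqrt{-1}x})$, matching \eqref{eq: Riccati FT}, and scalar Riccati uniqueness yields $g_t = f_t$.

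The main obstacle will be justifying the analytic manipulations on the series defining $g_t$: absolute and uniform convergence on $[0,1]$, the right to differentiate term-by-term in $t$, and the Cauchy-product formula for $g_t^2$. All of these reduce to a summability estimate $\sup_{t \in [0,T]} \sum_{n \ge 0} \lvert \varphi^n_t \rvert < \infty$, together with the analogous bound for $\dot{\varphi}^n_t$. Such estimates can be extracted from Lemma 1 of \cite{FFI-21a}; alternatively, one can derive them directly from an a priori energy estimate on \eqref{eq: Riccati inf} by working backward from the terminal time and controlling the Cauchy product via Gronwall-type arguments, in the spirit of the Catalan generating function whose radius of convergence is known. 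Once summability is in place, the rest of the proof is a straightforward application of scalar Riccati uniqueness, and the representation \eqref{eq: phitj ID} together with its consequence \eqref{eq: upper bound varphitj} follows.
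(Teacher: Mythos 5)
Your argument runs in the opposite direction from the paper's. The paper starts from the explicit, bounded solution $f_t$ of \eqref{eq: Riccati FT}, defines its Fourier coefficients $\mathbf{c}_{j,t}=\int_0^1 f_t(y)e^{2\pi\sqrt{-1}jy}\,\mathrm{d}y$, differentiates under the integral sign using \eqref{eq: Riccati FT} and the convolution rule for Fourier series to show that $\{\mathbf{c}_{j,t}\}$ solves the infinite system \eqref{eq: Riccati inf} with the right terminal data, and then invokes the uniqueness of the solution to the \emph{infinite} system (Lemma 1 of \cite{FFI-21a}) to conclude $\mathbf{c}_{j,t}=\varphi^j_t$. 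You instead build the generating function $g_t(x)=\sum_n\varphi^n_t e^{-2\pi\sqrt{-1}nx}$ out of the (abstractly given) solution of the infinite system, show it solves the \emph{scalar} equation \eqref{eq: Riccati FT}, and invoke scalar ODE uniqueness. Both architectures are legitimate, and yours has the aesthetic advantage of only needing uniqueness for a one-dimensional Riccati equation; the paper's has the advantage that every analytic manipulation is performed on the explicit, uniformly bounded function $f_t$, so the convolution and differentiation steps are comparatively cheap.

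The genuine gap is the summability estimate you yourself flag: $\sup_{0\le t\le T}\sum_{n\ge 0}\lvert\varphi^n_t\rvert<\infty$ together with the analogous bound for $\dot\varphi^n_t$. Everything in your proof --- the well-definedness of $g_t$, the term-by-term differentiation in $t$, the Cauchy-product identity for $g_t^2$, and the interchange of sum and integral when extracting $\varphi^j_t$ --- hinges on it, and you do not prove it. It does not follow from Lemma 1 of \cite{FFI-21a} as that lemma is invoked in this paper (only for existence and uniqueness of the individual $\varphi^i_t$, which are obtained recursively since $\dot\varphi^i_t$ depends only on $\varphi^0,\dots,\varphi^i$); a pointwise bound on each $\varphi^i_t$ is far from an $\ell^1$ bound on the whole sequence, uniform in $t$. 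The stationary solution $\varphi^i=-2C_{i-1}/4^i=O(i^{-3/2})$ makes summability plausible, but for the time-dependent system an actual proof (e.g.\ a Gronwall argument in a weighted $\ell^1$ norm, run backward from $T$) is required before scalar uniqueness can be applied. Until that estimate is supplied, the proof is incomplete; with it supplied, the rest of your argument goes through.
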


\begin{proof}Note that the family $\, \{ e^{-2\pi \sqrt{-1} j x } , j \in \mathbb N_{0}\} \, $ of continuous functions on $\, [0, 1]\,$ forms an orthonormal basis of the space $\,L^{2}([0, 1]) \,$, and the right hand of \eqref{eq: phitj ID}  is the $\,j\,$-th Fourier coefficient of $\, f_{t} \,$ with respect to this orthonormal basis, that is, 
\begin{equation} \label{eq: phitjFourierS}
f_{t}(x) \, =\,  \sum_{j=0}^{\infty} {\bf c}_{j, t} e^{- 2\pi \sqrt{-1}j x} \, , \quad {\bf c}_{j, t} \, :=\, \int^{1}_{0} f_{t}(y) e^{2\pi \sqrt{-1} j y} {\mathrm d} y  \, ; \quad x \in [0, 1 ] \, , t \in [0,T ] \, . 
\end{equation}

To show \eqref{eq: phitj ID}, we shall verify that the  Fourier coefficients $\, \{ {\bf c}_{j,t}\}\,$ satisfy the infinite Riccati equation \eqref{eq: Riccati inf} and we apply its uniqueness of the solution. Since $\, \{ f_{t}(x)\}\,$ satisfies the auxiliary Riccati equation \eqref{eq: Riccati FT}, by the direct calculation we obtain 
\begin{equation} \label{eq: derivativephijt}
\begin{split}
\frac{\,{\mathrm d} \,}{\,{\mathrm d} t \,} \int^{1}_{0} f_{t}(x) e^{2\pi \sqrt{-1} j x } {\mathrm d} x \, & =\,  \int^{1}_{0} \dot{f}_{t}(x) e^{2\pi \sqrt{-1} j x } {\mathrm d} x \\
\, & =\,  \int^{1}_{0}  ( (f_{t}(x))^{2} - ( 1 - e^{-2\pi \sqrt{-1} x} ) \varepsilon  ) e^{2\pi \sqrt{-1} j x } {\mathrm d} x \\
 \, & =\,  \int^{1}_{0} (f_{t}(x))^{2} e^{2\pi \sqrt{-1} j x } {\mathrm d} x  - \varepsilon  \int^{1}_{0}( 1 - e^{-2\pi \sqrt{-1} x} )   e^{2\pi \sqrt{-1} j x } {\mathrm d} x \\
 \, & =\, \int^{1}_{0} (f_{t}(x))^{2} e^{2\pi \sqrt{-1} j x } {\mathrm d} x  - \varepsilon^{j} \, ,\quad j \in \mathbb N_{0} \, , \, \, t \in [0, T ] \, , 
\end{split}	
\end{equation}
where $\, \{ \varepsilon^{j} \}\,$ was defined as $\, \varepsilon^{0} \, =\,  \varepsilon = - \varepsilon^{1} > 0 \,$, and $\, \varepsilon^{i} \, =\,  0 \,$, $\, i \ge 2 \,$. For the first term of the right hand, it follows from \eqref{eq: phitjFourierS} and the convolution of the Fourier series that 
\begin{equation} \label{eq: derivativephijt2}
\begin{split}
\int^{1}_{0} (f_{t}(x))^{2} e^{2\pi\sqrt{-1}jx} {\mathrm d} x \, & =\,  \int^{1}_{0} \big( \sum_{\ell=0}^{\infty} {\bf c}_{\ell,t} e^{-2\pi \sqrt{-1}\ell x} \sum_{k=0}^{\infty} {\bf c}_{k,t} e^{-2\pi \sqrt{-1}k x} \big) e^{2\pi \sqrt{-1}j x} {\mathrm d} x 
\\
\, & =\,  \int^{1}_{0} \big( \sum_{k=0}^{\infty} {\bf b}_{k,t} e^{-2\pi \sqrt{-1}k x} \big) e^{2\pi \sqrt{-1}jx} {\mathrm d} x \, =\,  {\bf b}_{j,t} \, :=\,  \sum_{k=0}^{j} {\bf c}_{k,t} {\bf c}_{j-k, t} 
\\
\, & =\,  \sum_{k=0}^{j} \Big(  \int^{1}_{0} f_{t}(x) e^{2\pi \sqrt{-1}k x} {\mathrm d} x \Big) \Big( \int^{1}_{0} f_{t}(x) e^{2\pi \sqrt{-1}(j-k) x} {\mathrm d} x \Big) \, . 
\end{split}
\end{equation} 

Substituting this expression in \eqref{eq: derivativephijt}, and because of \eqref{eq: phitjFourierS}, we obtain the infinite Riccati equation 
\begin{equation}
\begin{split}
\dot{\bf c}_{j,t} \, &=\, \frac{\,{\mathrm d} \,}{\,{\mathrm d} t \,} \int^{1}_{0} f_{t}(x) e^{2\pi \sqrt{-1} j x } {\mathrm d} x \\
& \, =\,   \sum_{k=0}^{j} \Big(  \int^{1}_{0} f_{t}(x) e^{2\pi \sqrt{-1}k x} {\mathrm d} x  \Big) \Big( \int^{1}_{0} f_{t}(x) e^{2\pi \sqrt{-1}(j-k) x} {\mathrm d} x \Big) - \varepsilon^{j} \\
\, & =\, \sum_{k=0}^{j} {\bf c}_{k,t} {\bf c}_{j-k, t} - \varepsilon^{j} \, ; \quad j \in \mathbb N_{0} \, , 0 \le t \le T \, , 
\end{split}
\end{equation}
equivalent to \eqref{eq: Riccati inf}. Also,  the terminal condition is satisfied 
\[
{\bf c}_{T,j} \, =\, \int^{1}_{0} f_{T}(x) e^{2\pi \sqrt{-1}j x} {\mathrm d} x \, =\,  \int^{1}_{0} c ( 1 - e^{-2\pi \sqrt{-1}x})  e^{2\pi \sqrt{-1}j x} {\mathrm d} x \, =\,  c^{j} \, , 
\]
where $\, \{ c^{j}\} \,$ was defined as $\, c^{0} \, =\,  c \, =\,  - c^{1} > 0 \,$ and $\, c^{i} \, =\,  0 \,$, $\, i \ge 2 \,$. Thus, by the uniqueness of the solution to the infinite Riccati equation \eqref{eq: Riccati inf}, we identify $\,{\bf c}_{j, t} \, =\,  \varphi^{j}_{t}\,$, $\, j \in \mathbb N_{0}\,$, $\, t \in [0, T]\,$ as in \eqref{eq: phitj ID}. 
\end{proof}

\subsection{Proof of Proposition \ref{prop: cvRiccati}}
Now we shall prove Proposition \ref{prop: cvRiccati}. 
Substituting \eqref{eq: relation FT-NDFT} into the inverse discrete Fourier transform \eqref{eq: DFT inversion}, we obtain the Riemann sum 
\[
\phi_{t}^{j} \, =\,  \frac{\,1\,}{\,N\,} \sum_{k=0}^{N-1} \widehat{\phi}^{k}_{t} \exp \Big(  \frac{2\pi \sqrt{-1} j k }{N}  \Big) \, =\,  \frac{\,1\,}{\,N\,} \sum_{k=0}^{N-1} f_{t} \Big( \frac{\,k\,}{\,N\,}\Big)  \exp \Big(  2\pi \sqrt{-1}j \cdot \frac{ k }{N}  \Big) \, 
 \]
for $\,j \, =\,  0, 1, \ldots , N-1 \, , \, 0 \le t \le T  
\,$. Since $\, f_{t}(x) e^{2\pi \sqrt{-1} k x}\,$ is a continuous function of $\, x\,$ for every fixed $\, j \,$ and $\, t \,$, taking the limit as $\, N \to \infty\,$, we obtain the limit of $\, \phi_{t}^{j}\,$, 
\begin{equation} 
\lim_{N\to \infty} \phi_{t}^{j} \, =\, \lim_{N\to \infty }  \frac{\,1\,}{\,N\,} \sum_{k=0}^{N-1} f_{t} \Big( \frac{\,k\,}{\,N\,}\Big)  \exp \Big(  2\pi \sqrt{-1}j \cdot \frac{ k }{N}  \Big) \, =\,  \int^{1}_{0} f_{t}(x) e^{2\pi \sqrt{-1}j x } {\mathrm d} x  \, =\,  \varphi^{j}_{t}
\end{equation}
for each fixed $\, j \in \mathbb N_{0} \,$ and $\, t \in [0, T ]\,$, thanks to the identification in Proposition \ref{prop: phitj ID}. 
\hfill $\,\square\,$ 

\subsection{Proof of Proposition \ref{prop: cvprod}}
The first part of the convergence results \eqref{eq: cvprod} is obtained in a similar manner as in the proof of Proposition \ref{prop: cvRiccati}. Indeed, using \eqref{eq: DFT inversion} and \eqref{eq: relation FT-NDFT}, we rewrite the sum as a Riemann sum, and then we take the limit, as $\, N \to \infty\,$, 
\begin{equation} \label{eq: prop: cvprod2}
\begin{split}
\sum_{j=0}^{N-1} \phi^{j}_{t} \phi^{N+i-j}_{t} \, & =\,  \sum_{j=0}^{N-1} \frac{\,1\,}{\,N\,} \sum_{k=0}^{N-1} \widehat{\phi}^{k}_{t} e^{2\pi \sqrt{-1}kj/N} \cdot \frac{\,1\,}{\,N\,} \sum_{\ell=0}^{N-1} \widehat{\phi}^{\ell}_{t} e^{2\pi \sqrt{-1}(N+i-j)\ell/N} 
\\
\, & =\,  \frac{\,1\,}{\,N^{2}\,} \sum_{k, \ell=0}^{N-1}  f_{t} \Big( \frac{\,k\,}{\,N\,} \Big) f_{t} \Big( \frac{\,\ell\,}{\,N\,}\Big) \sum_{j=0}^{N-1} e^{2\pi \sqrt{-1}(k-\ell)j/N} \cdot e^{2\pi \sqrt{-1}i \ell / N} 
\\
\, & =\, \frac{\,1\,}{\,N^{2}\,} \sum_{k, \ell=0}^{N-1}  f_{t} \Big( \frac{\,k\,}{\,N\,} \Big) f_{t} \Big( \frac{\,\ell\,}{\,N\,}\Big) \cdot N \cdot {\bf 1}_{\{k \, =\,  \ell\}} \cdot e^{2\pi \sqrt{-1}i \ell / N} \\
\, & =\, \frac{\,1\,}{\,N\,} \sum_{k=0}^{N-1} \Big[ f_{t} \Big( \frac{\,k\,}{\,N\,} \Big) \Big]^{2}e^{2\pi \sqrt{-1}i \ell / N}\\
 & \xrightarrow[N\to \infty]{} \int^{1}_{0} (f_{t}(x))^{2} e^{2\pi \sqrt{-1} i x } {\mathrm d} x \, =\, \sum_{j=0}^{i} {\bf c}_{j,t} {\bf c}_{i-j, t} \, =\,  \sum_{j=0}^{i} \varphi^{j}_{t} \varphi^{i-j}_{t}\, 
\end{split}
\end{equation}
for every $\, t \in [0, T]\,$ and $\, i \ge 0 \,$, because of \eqref{eq: phitj ID} and \eqref{eq: derivativephijt2}. Here, $\, {\bf 1}_{\{k=\ell\}}\,$ is the indicator function which takes $\,1\,$ on the set $\, k \, =\,  \ell\,$ and $\,0\,$, otherwise, and $\, {\bf c}_{\cdot,t}\,$ was defined in \eqref{eq: phitjFourierS}. This proves the first part of the convergence results \eqref{eq: cvprod}. 

For the second part of the convergence results, combining the first part \eqref{eq: prop: cvprod2} with the convergence of $\, \{\phi_{t}^{i}\}\,$ in Proposition \ref{prop: cvRiccati}, we obtain 
\begin{equation}
\sum_{j=i+1}^{N-1} \phi_{t}^{j} \phi_{t}^{N+i-j} \, =\,  \sum_{j=0}^{N-1} \phi^{j}_{t} \phi_{t}^{N+i-j} - \sum_{j=0}^{i} \phi_{t}^{j} \phi_{t}^{N+i-j} \xrightarrow[N\to \infty]{} \sum_{j=0}^{i} \varphi^{j}_{t} \varphi_{t}^{i-j} - \sum_{j=0}^{i} \varphi^{j}_{t} \varphi^{i-j}_{t} \, =\,  0 \, . 
\end{equation}

Therefore, we conclude the proof of Proposition \ref{prop: cvprod}. 
\hfill $\,\square\,$

\subsection{Proof of Proposition \ref{prop: unif conv}}
We shall evaluate the difference $\, D_{N}(t) := \sup_{0 \le i \le K} \sup_{0 \le s \le t} \lvert \phi_{s}^{i} - \varphi_{s}^{i}\rvert\,$, $\, 0 \le t \le T \,$. With the time-reversal $\, \overline{\phi}_{t}^{i} \, :=\,\phi_{T-t}^{i} \,$, $\, \overline{\varphi}_{t}^{i} \, :=\, \varphi^{i}_{T-t}\,$, $\, 0 \le t \le T\,$, it follows from the Riccati equations that for $\, i \, =\,  0, 1, \ldots , N-2\,$, $\, 0 \le t \le T\,$, 
\begin{equation*}
\begin{split}
- \dot{ \overline{\phi}}^{i}_{t} + \dot{ \overline{\varphi}}^{i}_{t} \, & =\,  \dot{\phi}_{t}^{i} - \dot{\varphi}_{t} \, =\,  \sum_{j=0}^{N-1} \phi_{t}^{j} \phi_{t}^{N+i-j} - \sum_{j=0}^{i} \varphi_{t}^{j} \varphi_{t}^{i-j} \\
\, & =\,  \sum_{j=i+1}^{N-1} \phi^{j}_{t} \phi_{t}^{N+i-j} + \sum_{j=0}^{i} [ ( \phi^{j}_{t} - \varphi^{j}_{t}) \phi_{t}^{i-j} + \varphi^{j}_{t} (\phi_{t}^{i-j} - \varphi^{i-j}_{t}) ] \\
\, & =\,  \sum_{j=i+1}^{N-1} \overline{ \phi}^{j}_{t} \overline{ \phi_{t}}^{N+i-j} + \sum_{j=0}^{i} [ ( \overline{ \phi}^{j}_{t} - \overline{ \varphi}^{j}_{t}) \overline{ \phi}_{t}^{i-j} + \overline{ \varphi}^{j}_{t} (\overline{ \phi}_{t}^{i-j} - \overline{ \varphi}^{i-j}_{t}) ] \, . 
\end{split}
\end{equation*}
Since we have $\, \overline{\phi}_{0}^{i} \, =\,  \phi_{T}^{i} \, =\,  \varphi_{T}^{i} \, =\,  \overline{\varphi}^{i}_{0}\,$, integrating both sides over $\, [ 0, s] ( \subseteq [0, T] ) \,$, taking the absolute values and using the triangle inequality, we obtain 
\begin{equation}
\begin{split}
\lvert \overline{\phi}^{i}_{s} - \overline{\varphi}^{i}_{s}\rvert \le \int^{s}_{0} \lvert  \sum_{j=i+1}^{N-1} \overline{ \phi}^{j}_{u} \overline{ \phi}_{u}^{N+i-j} \rvert {\mathrm d} u + \int^{s}_{0} 
\sum_{j=0}^{i} [ \lvert  \overline{ \phi}^{j}_{u} - \overline{ \varphi}^{j}_{u} \rvert \cdot \lvert 	  \overline{ \phi}_{u}^{i-j}\rvert  + \lvert \overline{ \varphi}^{j}_{u} \rvert \cdot  \lvert \overline{ \phi}_{u}^{i-j} - \overline{ \varphi}^{i-j}_{u}\rvert ] {\mathrm d} u 
\end{split}
\end{equation}

Then the difference $\, D_{N}(t) \,$ satisfies the inequality 
\begin{equation}
\begin{split}
D_{N}(t) \, & =\,  \sup_{0 \le i \le K} \sup_{0 \le s \le t} \lvert \phi_{s}^{i} - \varphi_{s}^{i} \rvert \, =\,  \sup_{0 \le i \le K} \sup_{0 \le s \le t} \lvert \overline{\phi}_{s}^{i} - \overline{\varphi}_{s}^{i} \rvert\\
& \le \int^{t}_{0} \sup_{0 \le i \le K} \sup_{0 \le u \le s} \lvert \sum_{j=i+1}^{N-1} \overline{ \phi}^{j}_{u} \overline{ \phi}_{u}^{N+i-j} \rvert {\mathrm d} u \\
& \quad {} + \int^{t}_{0} \sup_{0 \le i \le K} \sup_{0 \le u \le s} \max ( \lvert \overline{\phi}_{u}^{i} \rvert\, , \lvert \overline{\varphi}_{u}^{i} \rvert ) D_{N}(s) {\mathrm d} s \\
& \le \,  c_{N,1}(t) + \int^{t}_{0} c_{N,2}(s) D_{N}(s) {\mathrm d} s \, ,  
\end{split}
\end{equation}
where we defined 
\[
c_{N,1}(t) \, :=\, t \cdot \sup_{0 \le i \le K} \sup_{T-t \le u \le T} \lvert \sum_{j=i+1}^{N-1} \phi^{j}_{u} \phi_{u}^{N+i-j} \rvert \le c_{N, 1}(T) \, , 
\]
\[
c_{N, 2} (t) \, :=\,  K \cdot \sup_{0 \le i \le K}\sup_{T-t \le u \le T} \max ( \lvert {\phi}_{u}^{i} \rvert\, , \lvert {\varphi}_{u}^{i}\rvert ) \le c_{N, 2}(T) < \infty \, 
\]
for $\, 0 \le t \le T \,$. Note that by \eqref{eq: bound for phi hat} and \eqref{eq: upper bound varphitj}, we have $\, \sup_{N} c_{N,2}(T) < \infty\,$. Applying the Gronwall inequality, we obtain 
\begin{equation} \label{eq: Gronwall0}
D_{N}(T) \le c_{N, 1}(T) \exp \Big( \int^{T}_{0} c_{N, 2}(t) {\mathrm d} t \Big) \, . 
\end{equation}

Since the function $\,f_{\cdot}(\cdot)\,$ is bounded, we may refine the proof of Propositions \ref{prop: cvRiccati}-\ref{prop: cvprod}. Particularly, the approximation of the Riemann sum in \eqref{eq: prop: cvprod2} is uniform over $\, i = 0, 1, \ldots , K\,$ and over $\, [0, T]\,$. Thus, we obtain 
\[
\lim_{N\to \infty} c_{N,1}(T) \, =\,  \lim_{N\to \infty} T\cdot  \sup_{0\le i \le K} \sup_{0 \le u \le T} \lvert \sum_{j=i+1}^{N-1} \phi^{j}_{u} \phi_{u}^{N+i-j} \rvert \, =\,  0 \, . 
\] 
Therefore, combining this with \eqref{eq: Gronwall0}, we conclude the proof of Proposition \ref{prop: unif conv}: 
\[
\lim_{N\to \infty} \sup_{0 \le i \le K} \sup_{0 \le t \le T} \lvert \phi^{i}_{t} - \varphi^{i}_{t} \rvert \, =\,  \lim_{N\to \infty} D_{N}(T) \, \le \lim_{N\to \infty} c_{N,1}(T) \exp \Big( \int^{T}_{0} c_{N,2}(t) {\mathrm d} t \Big) \, =\,   0 \, . 
\]
\hfill $\,\square\,$

As a consequence of Proposition \ref{prop: unif conv}, we have the following corollary which resolves the open question left in \cite{FFI-21a}. 
\begin{cor} 
The $\,N\,$-player Nash equilibrium of linear quadratic stochastic differential games on the directed chain periodic network in \cite{FFI-21a} converges to the infinitely many player Nash equilibrium of linear quadratic stochastic differential games on the infinite directed chain network in \cite{FFI-21a}. 
\end{cor}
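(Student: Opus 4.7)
The strategy for proving the corollary is to translate the Riccati convergence just established into convergence of the Nash equilibria themselves by exploiting the explicit feedback representation developed in \cite{FFI-21a}. In that paper, for both the finite $N$-player periodic game and the infinite-player directed chain game, the optimal Markovian controls $\alpha^{i,\ast}_t$ are written in closed form as affine combinations of the relevant players' states, with coefficients that are linear expressions in the Riccati solutions $\{\phi^j_t\}_{j=0}^{N-1}$ (respectively $\{\varphi^j_t\}_{j \in \mathbb N_0}$) and the terminal data $c$. The value functions take the form of quadratic expressions in the initial states whose coefficients are likewise built from the Riccati solutions. I would begin the proof by recalling these explicit formulas verbatim from \cite{FFI-21a}, so that the corollary becomes a statement about the continuity of certain simple functionals of the Riccati solutions.

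Once the equilibria are expressed in this form, the proof proceeds in three steps. First, for any truncation level $K \in \mathbb N_0$, Proposition \ref{prop: unif conv} gives $\sup_{0 \le i \le K}\sup_{0 \le t \le T}|\phi^i_t - \varphi^i_t| \to 0$, so the feedback coefficients attached to the first $K$ neighbors of a given player in the $N$-player game converge uniformly to those in the infinite-player game. Second, the extra ``wrap-around'' contributions specific to the periodic structure, most notably the sum $\sum_{j=i+1}^{N-1}\phi^j_t\phi^{N+i-j}_t$ that enters the $N$-player Riccati dynamics and through it the feedback law, vanish as $N \to \infty$ by the second half of Proposition \ref{prop: cvprod}. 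Third, combining these two pieces with the uniform bounds \eqref{eq: bound for phi hat} and \eqref{eq: upper bound varphitj} on $\phi^i_t$ and $\varphi^i_t$, a standard Gronwall estimate applied to the linear state SDEs driven by the optimal feedback controls yields convergence of the finite-dimensional marginals of the optimal state process (and hence of the open-loop optimal controls) uniformly on $[0,T]$ in $L^2$. The associated value functions, being quadratic in the initial data with Riccati-determined coefficients, converge by the same mechanism.

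The main obstacle I anticipate is not the algebra but a modeling point: unlike a single optimization, a Nash equilibrium is a fixed point and need not vary continuously with parameters in general. Here that difficulty is resolved by the availability, in the linear-quadratic setting of \cite{FFI-21a}, of a unique equilibrium characterized by the Riccati system, which reduces equilibrium convergence to Riccati convergence. A secondary issue is specifying in what topology the infinite-player equilibrium is the limit: since the infinite game lives on a countable product space, the natural and sufficient notion is convergence of each finite-dimensional marginal of the optimal state and each finite truncation of the optimal control, which is precisely what Propositions \ref{prop: cvRiccati}, \ref{prop: cvprod}, and \ref{prop: unif conv} deliver.
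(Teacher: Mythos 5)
Your proposal is correct and follows essentially the same route as the paper, which simply asserts the corollary as an immediate consequence of Proposition~\ref{prop: unif conv}: since the Nash equilibrium feedback controls and value functions in \cite{FFI-21a} are explicit functionals of the Riccati solutions, uniform convergence of $\{\phi^i_t\}$ to $\{\varphi^i_t\}$ transfers directly to convergence of the equilibria. Your additional remarks on the vanishing wrap-around terms, the Gronwall estimate for the state SDEs, and the uniqueness of the equilibrium resolving the fixed-point continuity issue supply detail that the paper leaves implicit, but they do not change the underlying argument.
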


\section*{Acknowledgement}
{Part of research was supported by National Science Foundation NSF DMS-2008427. T.I.  would like to thank the Isaac Newton Institute for Mathematical Sciences, Cambridge, for support and hospitality during the programme Stochastic Systems for Anomalous Diffusions (SSD), supported by EPSRC grant no EP/K032208/1, where a part of work on this paper was undertaken.}


\section{Appendix}
\subsection{Finite system solved by matrix Riccati equation}

The above Riccati equation (\ref{eq: Riccati N}) can be written as a matrix Riccati equation  
\begin{equation} \label{eq: mRiccati N}
\dot{\Phi}(t) = \Phi(t) \Phi(t) - {\mathbf E} \, , \quad \Phi(T) := {\mathbf C} \, , 
\end{equation}
where $\, \Phi(\cdot)\, $ is the $\, N\times N\, $ matrix-valued function $\, \Phi(t) := (\Phi_{i,j} (t) )_{0\le i,j \le N-1} \, $, $\, 0 \le t \le T\, $ with $\, \Phi _{i,j}(t) := \phi_t^{i-j}\, $ for $\, 0 \le i, j \le N-1\, $ with the condition $\, \phi_\cdot^i = \phi_\cdot^{i+N}\, $ for every $\, i \in \mathbb Z \, $ and  $\, \mathbf E\, $ is an $\, N\times N\, $ matrix given by 

$$
\Phi (t) := \left( \begin{array}{ccccc}
\phi^0_t & \phi^{N-1}_t & \cdots & & \phi^1_t \\
\phi^1_t & \phi^0_t & \ddots & & \phi^2_t \\
\vdots & \ddots & \ddots & \ddots & \vdots \\
\vdots & \ddots & \ddots & \ddots & \phi^{N-1}_t\\
\phi^{N-1}_t & \cdots & & \phi^1_t & \phi^0_t \\
\end{array}
\right) \, , \quad 
\mathbf E := \left ( \begin{array}{ccccc} \varepsilon & 0 & \cdots & 0 & - \varepsilon \\
-\varepsilon & \varepsilon & \ddots & \ddots & 0 \\
0 & -\varepsilon & \ddots & \ddots  &\vdots \\
\vdots & \ddots & \ddots & \ddots & 0 \\
0& \cdots & 0 & -\varepsilon &  \varepsilon \\ 
\end{array}\right) \, , 
$$
and the $\, N\times N\, $ matrix $\, \mathbf C\, $ determines the terminal condition 
$$
\mathbf C := \left ( \begin{array}{ccccc} c & 0 & \cdots & 0 & -c \\
-c & c & \ddots & \ddots & 0 \\
0 & -c & \ddots & \ddots  &\vdots \\
\vdots & \ddots & \ddots & \ddots & 0 \\
0& \cdots & 0 & -c &  c \\ 
\end{array}\right) \, .  
$$
Here, $\, \dot{\Phi}(t) \,$ stands for the element wise differentiation of $\,\Phi(t)\,$ with respect to $\, t\,$.

Let us consider the time reversal parametrized by $\, \tau := T-t\, $ and $\, \Psi (\tau) := \Phi (T-\tau) \, $, $\, 0 \le t \le T\, $, $\, 0 \le \tau \le T\, $. Then the matrix-valued Riccati equation is 
\begin{equation}
\dot{\Psi} (\tau) = - \Psi(\tau) \Psi (\tau) + {\mathbf E} 
\end{equation}
for $\, 0 \le \tau \le T\, $ with the initial value $\, \Psi (0) := {\mathbf C}\, $. Its solution is given by 
\begin{equation} \label{eq: Riccati N sol}
\Psi (\tau) = ({\mathbf O}_{21} (\tau) + {\mathbf O}_{22} (\tau) {\mathbf C}) ({\mathbf O}_{11} (\tau) + {\mathbf O}_{12} (\tau) {\mathbf C})^{-1} \, , 
\end{equation}
where $\, {\mathbf O}_{ij}(\cdot)\, $, $\, 1\le i,j \le 2\, $ are the $\, N\times N\, $ block matrix elements of $\,{\mathbf O} (\cdot)\,$ defined by 
\begin{equation} \label{eq: M and O}
{\mathbf M} := \left( \begin{array}{cc} {\mathbf 0} &  {\mathbf I} \\ 
{\mathbf E} & {\mathbf 0} \end{array} \right) \, , \quad 
{\mathbf O}(\tau) := \left ( \begin{array}{cc} {\mathbf O}_{11}(\tau) & {\mathbf O}_{12}(\tau) \\ {\mathbf O}_{21}(\tau) & {\mathbf O}_{22}(\tau) \end{array} \right) := {\bf \exp} ( {\mathbf M}\tau) \, , 
\end{equation}
for $\, 0 \le \tau \le T\, $. Here $\, {\mathbf 0}\,  $ is $\, N\times N\, $ zero matrix and $\, {\mathbf I}\, $ is $\, N\times N\, $ identity matrix. Thus, we obtain the solution to the Riccati equation (\ref{eq: Riccati N}) as the first column of $\, \Phi(t) = \Psi(T-t)\, $ for $\, 0 \le t \le T\, $. 

\bigskip 

The characteristic polynomial of the $\,2N\times 2N\,$ matrix $\,{\mathbf M} \,$ in (\ref{eq: M and O}), in terms of $\, \lambda \in \mathbb C\,$, is simply given by 
\begin{equation}
\text{det} ( \lambda\, {\mathbf I} - {\mathbf M} ) \, =\,  (\lambda^{2} - \varepsilon)^{N} - (-\varepsilon)^{N} , 
\end{equation}
and hence the eigenvalues are 
\[
\lambda \, =\,  \pm  \sqrt{{\varepsilon} \cdot \Big( 1 - \exp \Big( \sqrt{-1} \cdot \frac{\,2\pi \, k\,}{\,N\,}\Big) \Big)} \, ; \quad k \, =\, 0, 1, \ldots , N-1 \, , 
\]
and $\, \lambda \, =\,  0 \,$ has multiplicity of $\, 2\,$. Thus, the size of the eigenvalues is bounded by $\, \sqrt{2 \varepsilon} \,$. For example, in the case of $\,N \, =\, 4\,$, the eight eigenvalues are 
\[
\{0, 0, \pm \sqrt{(1 + \sqrt{-1} )\varepsilon\, } \, , \pm \sqrt{(1 - \sqrt{-1} )\varepsilon\, },  \pm \sqrt{2\varepsilon\, } \} \, . 
\]

The direct numerical calculation of (\ref{eq: Riccati N sol}) is not stable for a large $\,\tau\,$, because of multiple eigenvalues. It is often suggested (e.g., Vaughan \cite{Vaughan}) to calculate iteratively   
\[
{\Psi} ((k+1) \Delta \tau) = ({\mathbf O}_{21} (\Delta \tau) + {\mathbf O}_{22} (\Delta\tau) {\Psi} (k \Delta \tau)) ({\mathbf O}_{11} (\Delta \tau) + {\mathbf O}_{12} (\Delta \tau) {\Psi} (k \Delta) )^{-1}  \, ; \quad k \, =\,  0, 1, 2, \ldots 
\]
with $\, \Psi (0) \, =\,  {\mathbf C} \,$, where $\, \Delta \tau \,$ is set to be small.

\subsection{Generating function for infinite Riccati equation} 
For the infinite system \eqref{eq: Riccati inf} let us recall the generating function $\, S_{t}(z) \, :=\, \sum_{k=0}^{\infty} z^{k} \varphi_{t}^{k}  \,$ for $\, \varphi_{\cdot}^{k}\,$, $\, k \, =\,  0, 1, 2, \ldots \,$ satisfies the scaler Riccati equation 
\[
\frac{\,{\mathrm d} \,}{\,{\mathrm d} t\,}S_{t}(z) \, =\, [ S_{t}(z)]^{2} - \varepsilon (1 - z) \, , \quad 0 \le t \le T \, , \quad S_{T}(z) \, =\,  c(1 - z) 
\]
for $\, \lvert z \rvert < 1 \,$. As in Proposition \ref{prop: Riccati FT}, the solution to this Riccati equation is given by 
\[
S_{t}(z) \, =\, \sqrt{\varepsilon ( 1 -z )} \cdot \frac{\,\overline{\mathfrak a}_{}^{+} \overline{\mathfrak e}_{t}^{+} - \overline{\mathfrak a}_{}^{-} \overline{\mathfrak e}_{t}^{-}\,}{\,\overline{\mathfrak a}_{}^{+} \overline{\mathfrak e}_{t}^{+} + \overline{\mathfrak a}_{}^{-} \overline{\mathfrak e}_{t}^{-}\,} \, , 
\]
where 
\[
\overline{\mathfrak a}_{}^{\pm} \, := \, \sqrt{\varepsilon(1 - z)} \pm c ( 1 - z)  \, , \quad   \overline{\mathfrak e}_{t}^{\pm} \, :=\, \exp \Big( \pm \sqrt{ \varepsilon ( 1 - z) } (T-t) \Big)
\]
for $\, 0 \le t \le T\,$, $\, \lvert z \rvert < 1 \,$.



\begin{thebibliography}{99}
\bibitem{FFI-21a}
Y. Feng, J.-P. Fouque, T. Ichiba. \newblock
Linear-Quadratic Stochastic Differential Games on Directed Chain Networks.  
\newblock {\em Journal of Mathematics and Statistical Science}, 7, 25--67, 2021. 

\bibitem{FFI-21b}
 Y. Feng, J.-P. Fouque, T. Ichiba. \newblock Linear-Quadratic Stochastic Differential Games on Random Directed Networks. 
\newblock {\em Journal of Mathematics and Statistical Science}, 7, 79--108, 2021.


\bibitem{MR4621507}
P.J. Miana and N. Romero. 
\newblock {Catalan generating functions for bounded operators}. 
\newblock {\em Ann. Funct. Anal.}, 14, 69, 21 (2023). 

\bibitem{MR4615116}
P.J. Miana and N. Romero. 
\newblock {Powers of {C}atalan generating functions for bounded operators}. 
\newblock {\em Math. Methods Appl. Sci.}, {46}, 12, {13262--13278} (2023). 

\bibitem{MR0847717}
R.P. Stanley. \newblock {\em Enumerative combinatorics. {V}ol. 1}.   
\newblock Cambridge Studies in Advanced Mathematics, {62}  Cambridge University Press, Cambridge {1986}. 

\bibitem{MR1676282}
R.P. Stanley. \newblock {\em Enumerative combinatorics. {V}ol. 2}.   
\newblock Cambridge Studies in Advanced Mathematics, {62}  Cambridge University Press, Cambridge {1999}. 

\bibitem{MR3467982}
 R.P. Stanley. \newblock {\em Catalan numbers.}  
\newblock Cambridge University Press, New York, {2015}.  

\bibitem{Vaughan}
D.R. Vaughan. \newblock 
{A Negative Exponential Solution for the Matrix Riccati equation.} 
\newblock 
{\em IEEE Transactions on Automatic Control}, 14(1):72--75, 1969.

\end{thebibliography}
\end{document}